\documentclass[11pt]{amsart}
%%%%%%%%%%%%%%%%%%%%%%%%%%%%%%%%%%%%%%%%%%%%%%%%%%%%%%%%%%%%%%%%%%%%%%%%%%%%%%%%%%%%%%%%%%%%%%%%%%%%%%%%%%%%%%%%%%%%%%%%%%%%%%%%%%%%%%%%%%%%%%%%%%%%%%%%%%%%%%%%%%%%%%%%%%%%%%%%%%%%%%%%%%%%%%%%%%%%%%%%%%%%%%%%%%%%%%%%%%%%%%%%%%%%%%%%%%%%%%%%%%%%%%%%%%%%
\usepackage{amsmath}
\usepackage[active]{srcltx}
\usepackage{t1enc}
\usepackage[latin2]{inputenc}
\usepackage{verbatim}
\usepackage{amsmath,amsfonts,amssymb,amsthm}
\usepackage[mathcal]{eucal}
\usepackage{enumerate}
\usepackage[centertags]{amsmath}
\usepackage{graphics}
\usepackage[active]{srcltx}

\setcounter{MaxMatrixCols}{10}
%TCIDATA{OutputFilter=Latex.dll}
%TCIDATA{Version=5.50.0.2890}
%TCIDATA{<META NAME="SaveForMode" CONTENT="1">}
%TCIDATA{BibliographyScheme=Manual}
%TCIDATA{LastRevised=Tuesday, September 11, 2012 15:33:09}
%TCIDATA{<META NAME="GraphicsSave" CONTENT="32">}

\newtheorem{theorem}{Theorem}
\newtheorem{lemma}{Lemma}
\newtheorem{cor}{Corollary}

\newtheorem{OldTheorem}{Theorem}

\def\Log{{\rm Log\,}}

\def\tg{{\rm tg \,}}
\def\ctg{{\rm ctg \,}}

\def\ZR{\ensuremath{\mathbb R}}
\def\ZZ{\ensuremath{\mathbb Z}}

\def\ZN{\ensuremath{\mathbb N}}
\def\ZT{\ensuremath{\mathbb T}}

\def\md#1#2\emd{
\ifx0#1\begin{equation*} #2 \end{equation*}\fi
\ifx1#1\begin{equation}#2\end{equation}\fi
\ifx2#1\begin{align*}#2\end{align*}\fi
\ifx3#1\begin{align}#2\end{align}\fi
\ifx4#1\begin{gather*}#2\end{gather*}\fi
\ifx5#1\begin{gather}#2\end{gather}\fi
\ifx6#1\begin{multline*}#2\end{multline*}\fi
\ifx7#1\begin{multline}#2\end{multline}\fi
\ifx8#1\begin{multline*}\begin{split}#2\end{split}\end{multline*}\fi
\ifx9#1\begin{multline}\begin{split}#2\end{split}\end{multline}\fi
}
\newcommand {\e }[1]{(\ref{#1})}
\newcommand {\lem }[1]{Lemma \ref{#1}}
\newcommand {\trm }[1]{Theorem \ref{#1}}
\newcommand {\coro }[1]{Corollary \ref{#1}}

\begin{document}

\author{G. A. Karagulyan and H. Mkoyan}
\title[An exponential estimate for square partial sums]{An exponential estimate for the square partial sums of multiple Fourier series}

\address{G. A. Karagulyan, Faculty of Mathematics and Mechanics, Yerevan
	State University, Alex Manoogian, 1, 0025, Yerevan, Armenia}
\email{g.karagulyan@ysu.am}
\address{H. 	Mkoyan, Faculty of Mathematics and Mechanics, Yerevan
	State University, Alex Manoogian, 1, 0025, Yerevan, Armenia}
\email{hasmikmkoyan@ysu.am}

\maketitle

\begin{abstract}
We prove an exponential integral estimate for the quadratic partial sums
of multiple Fourier series on large sets that implies some new properties of Fourier series. 
\end{abstract}

\medskip

\footnotetext{%
2010 Mathematics Subject Classification: 40F05, 42B08
\par
Key words and phrases: multiple Fourier series, exponential integral estimates, quadratic partial sums.}

\section{Introduction}

Let $\ZT=\ZR/2\pi$ and $\ZT^d$ denote the $d$-dimensional torus.
The multiple trigonometric Fourier series of a function $f\in L^1(\ZT^d)$
and its conjugate are the series
\begin{align}
&\sum_{\textbf{n}= (n_1,\ldots,n_d) \in\ZZ^d }a_{\textbf{n}} e^{i\textbf
	n\cdot \textbf{x}},\label{x1}\\
&\sum_{\textbf{n}= (n_1,\ldots,n_d) \in\ZZ^d\setminus \{{\bf 0}\}}\overline a_{\textbf{n}} e^{-i\textbf
	n\cdot \textbf{x}},\label{x2}
\end{align}%
where
\begin{align*}
&\textbf{n}=(n_1,\ldots,n_d),\quad \textbf{x}=(x_1,\ldots,x_d),\\
&\textbf n\cdot \textbf{x}=n_1x_1+\ldots+n_dx_d,\\
&a_{\textbf{n}}=\frac{1}{(2\pi)^d }\int_{\ZT^d}f\left(
\textbf{x}\right) e^{-i\textbf{n}\cdot \textbf{x}}d\textbf{x}.
\end{align*}%
Denote the rectangular and square partial sums of series \e {x1} by
\begin{align*}
&S_{\textbf{n}}f\left({\bf x}\right) =\sum_{-n_i\le k_i\le n_i}a_{\textbf{k}} e^{i\textbf
			k\cdot \textbf{x}},\quad \textbf{n}\in\ZZ^d,\\
&S_{n}f\left( {\bf x}\right) =\sum_{-n\le k_i\le n}a_{\textbf{k}} e^{i\textbf
		k\cdot \textbf{x}},\quad n\in\ZN,
\end{align*}%
and let $\tilde S_{\textbf{n}}$ and $\tilde S_{n}$ be their conjugates respectively. 

We shall consider the Orlicz classes of functions corresponding to the logarithmic functions 
\begin{equation}
\Log_k(u)=|u|\max\{0,\log^k|u|\},\quad k=1,2,\ldots .
\end{equation} 
That is the Banach space of functions
\begin{equation*}
\Log_k(L)(\ZT^d)=\left\{f\in L^1({\ZT^d}):\, \int_{\ZT^d}\Log_k(f)<\infty\right\}
\end{equation*}
with the Luxemburg norm
\begin{equation*}
\| f\|_{\Log_k(L)}=\inf \left\{ \lambda :\,\lambda
>0,\,\int\limits_{\ZT^d} \Log_k\left( \frac{f}{\lambda }\right) \leq
1\right\} <\infty.
\end{equation*}
It is well known that the rectangular partial sums of $d$-dimensional Fourier series of any function $f\in \Log_{d-1}(L)(\ZT^d)$ converge in measure  (\cite{Zyg1}, \cite{Zhi}), that means we have
\begin{equation}\label{a75}
\lim_{\min({\bf n})\to \infty}\left|\{{\bf x}\in\ZT^d:\,|S_{\textbf{n}}f\left({\bf x}\right)-f({\bf x})|>\varepsilon \}\right|=0 
\end{equation}
for any $\varepsilon>0$, where 
\begin{equation*}
\min({\bf n})=\min_{1\le i\le d}{n_i}.
\end{equation*}
On the other hand it was established by Konyagin \cite {Kon} and Getsadze \cite{Get} that $\Log_{d-1}(L)$ is the widest  Orlicz space, whose functions satisfy \e {a75}. 

The papers \cite {Kar1, Kar2} have considered the following problem: find the exact estimate for the growth of the function $\Phi:\ZR^+\to\ZR^+$ with $\lim_{t\to\infty}\Phi(t)=0$ such that for any function $f\in \Log_{d-1}(L)(\ZT^d)$ and $\varepsilon>0$ one can find a set $E_{f,\varepsilon}\subset \ZT^d$, $|E_{f,\varepsilon}|>(2\pi)^d-\varepsilon$, satisfying the condition
\begin{equation}\label{a36}
\lim_{\min({\bf n})\to \infty}\int_{E_{f,\varepsilon}}\Phi(|S_{\bf n}f({\bf x})-f({\bf x})|)d{\bf x}=0.
\end{equation}
The expected sharp bound of the rate of such function is 
\begin{equation}\label{a37}
\limsup_{t\to\infty}\frac{\log \Phi(t)}{t^{1/d}}<\infty. 
\end{equation}
One can observe that relation \e {a36} implies convergence in measure and moreover, it gives quantitative characterization  of the convergence rate. 

In one dimension this problem was considered in \cite{Kar2}. It was proved in \cite{Kar2} the estimate
\begin{equation}\label{a39}
\int_{\ZT}\exp\left(c_1\frac{\tilde f(x) }{Mf(x)}\right)dx<c_2
\end{equation}
for the conjugate function $\tilde f$, where $Mf(x)$ is the Hardy-Littlewood maximal function. Applied this inequality the paper derives the following exponential estimate for the one dimensional partial sums of Fourier series, which in turn immediately implies \e {a36} in one dimension. 
\begin{OldTheorem}[\cite {Kar2}]	
	For any $f\in L^1(\ZT)$ it holds the inequality
	\begin{equation}\label{a67}
	\int_{\ZT}\exp\left(c_1\frac{|S_nf(x)|+|\tilde S_nf(x)|}{Mf(x)}\right)dx\le c_2,\quad n=1,2,\ldots,
	\end{equation}
where $c_1$ and $c_2$ are absolute constants.
\end{OldTheorem}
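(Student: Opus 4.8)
The plan is to reduce the truncated operators $S_n$ and $\tilde S_n$ to the untruncated conjugate function through the classical modulation identity, and then to feed the resulting pieces into \e{a39}. Set $g_n(t)=f(t)\cos nt$ and $h_n(t)=f(t)\sin nt$; since $|g_n|\le|f|$ and $|h_n|\le|f|$ pointwise, their Hardy--Littlewood maximal functions obey $Mg_n\le Mf$ and $Mh_n\le Mf$. Starting from the Dirichlet kernel written as $D_n(t)=\cos nt+\sin nt\cot(t/2)$ and its conjugate $\tilde D_n(t)=\cot(t/2)-\cos nt\cot(t/2)+\sin nt$, and expanding $\cos n(x-s)$ and $\sin n(x-s)$ by the addition formulas, I would derive the standard representations
\begin{align*}
&S_nf(x)=\sin(nx)\,\tilde g_n(x)-\cos(nx)\,\tilde h_n(x)+R_n(x),\\
&\tilde S_nf(x)=\tilde f(x)-\cos(nx)\,\tilde g_n(x)-\sin(nx)\,\tilde h_n(x)+R_n'(x),
\end{align*}
where $R_n(x)=\frac{1}{2\pi}\int_{\ZT}f(x-t)\cos nt\,dt$ and $R_n'(x)=\frac{1}{2\pi}\int_{\ZT}f(x-t)\sin nt\,dt$. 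The singular integrals defining $\tilde g_n$ and $\tilde h_n$ are taken in the principal value sense; they exist a.e. since $g_n,h_n\in L^1(\ZT)$, and recombining the two terms restores the smooth kernel, so the identities are legitimate (the normalisation constant in the definition of the conjugation is harmless, as only the moduli $|\tilde f|,|\tilde g_n|,|\tilde h_n|$ will be used).

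Next I would dispose of the remainders. Since $Mf(x)\ge\frac{1}{2\pi}\int_{\ZT}|f|>0$ for every $x$ whenever $f\not\equiv 0$, both $|R_n(x)|$ and $|R_n'(x)|$ are at most $\frac{1}{2\pi}\int_{\ZT}|f|\le Mf(x)$. Adding the two identities and applying the triangle inequality then gives the pointwise bound
\begin{equation*}
|S_nf(x)|+|\tilde S_nf(x)|\le|\tilde f(x)|+2|\tilde g_n(x)|+2|\tilde h_n(x)|+2Mf(x),
\end{equation*}
and dividing by $Mf(x)$ together with $Mf\ge Mg_n$ and $Mf\ge Mh_n$ yields
\begin{equation*}
\frac{|S_nf(x)|+|\tilde S_nf(x)|}{Mf(x)}\le 2+\frac{|\tilde f(x)|}{Mf(x)}+2\,\frac{|\tilde g_n(x)|}{Mg_n(x)}+2\,\frac{|\tilde h_n(x)|}{Mh_n(x)}.
\end{equation*}

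I would then exponentiate with a small absolute constant $\delta>0$ and split the three quotients by convexity of the exponential, using $e^{\alpha+\beta+\gamma}\le\frac{1}{3}(e^{3\alpha}+e^{3\beta}+e^{3\gamma})$ with $\alpha=\delta\frac{|\tilde f|}{Mf}$, $\beta=2\delta\frac{|\tilde g_n|}{Mg_n}$, $\gamma=2\delta\frac{|\tilde h_n|}{Mh_n}$. Choosing $\delta$ so small that $6\delta$ does not exceed the constant $c_1$ of \e{a39}, each of the three integrals $\int e^{3\alpha}$, $\int e^{3\beta}$, $\int e^{3\gamma}$ is controlled by \e{a39} applied to $f$, $g_n$ and $h_n$ respectively. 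To replace $\tilde f$ by $|\tilde f|$ (and likewise for $\tilde g_n,\tilde h_n$) I would apply \e{a39} to both $\pm f$, using $\widetilde{-f}=-\tilde f$ and $M(-f)=Mf$, so that each of the three integrals is at most $2c_2$; with the factor $e^{2\delta}$ coming from the additive term $2$ this produces the bound $2c_2e^{2\delta}$, an absolute constant. This is exactly \e{a67}, with the constants there equal to $\delta$ and $2c_2e^{2\delta}$.

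The only genuinely analytic ingredient is \e{a39}; granting it, the argument is essentially bookkeeping, and the steps demanding the most care are the principal-value justification of the modulation identities and the uniform domination $Mg_n,Mh_n\le Mf$, which lets a single maximal function serve all three pieces. For complex $f$ one splits into real and imaginary parts, noting $M(\operatorname{Re}f),M(\operatorname{Im}f)\le Mf$, so no new idea is required. I expect the representation identities, rather than the concluding exponential manipulation, to be the part that needs the most attention.
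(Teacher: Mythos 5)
Your proposal is correct and follows essentially the same route as the source: the modulation identity reducing $S_nf$ and $\tilde S_nf$ to conjugates of $f(t)\cos nt$ and $f(t)\sin nt$ plus harmless remainders, the domination of their maximal functions by $Mf$, and an application of \e{a39} is precisely the argument the paper itself uses in the one-dimensional base case of the proof of Theorem \ref{T1} (and attributes to \cite{Kar2}). The bookkeeping with the convexity splitting, the $\pm f$ trick to pass from $\tilde f$ to $|\tilde f|$, and the choice of $\delta$ are all sound.
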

The sharpness of exponent in \e {a67} and so in \e {a36} was proved by Oskolkov \cite{Osk}.

The relation \e {a36} in two dimensions with $\Phi$ satisfying \e {a37} was established in \cite{Kar1}. The case $d\ge 3$ of the problem as well as the problem of sharpness  of condition \e {a37} in two dimensions are open. 

The analogous estimates for one-dimensional Walsh and rearranged Haar systems were proved in \cite{Kar3}. The paper \cite{Kar4} considers a similar problem for general orthogonal $L^2$-series. 

In this paper we consider a similar problem for the square partial sums. The main results of the paper is the following
\begin{theorem}\label {T1}
	For any $f\in \Log_{d-1}(L)(\ZT^d)$ there exists a measurable function $F(\textbf{x})>0$  on $\ZT^d$ such that
	\begin{align}
	&|\{\textbf{x}\in\ZT^d:\,F(\textbf{x})>\lambda \}|\lesssim \frac{\|f\|_{\Log_{d-1}(\ZT^d)}}{\lambda},\label{a47}\\
	&\int_{\ZT^d}\exp\left(\frac{|S_nf({\bf x})|+|\tilde S_nf({\bf x})|}{F(\textbf{x})}\right)d{\bf x}\lesssim 1,\quad n=1,2,\ldots.\label{a40}
	\end{align}
\end{theorem}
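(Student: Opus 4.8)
The plan is to remove the dependence on $n$ by reducing the square partial sums to \emph{multiple conjugate functions} of modulated copies of $f$, and then to prove a single, $n$-independent exponential estimate for these conjugate functions by iterating the one-dimensional inequality \e{a39} over the $d$ coordinates. For $1\le i\le d$ let $M_i$ denote the one-dimensional Hardy--Littlewood maximal operator acting in the variable $x_i$, and let $\mathfrak M=M_1M_2\cdots M_d$ be the strong (iterated) maximal operator. I would take
\[
F(\mathbf x)=c\,\mathfrak Mf(\mathbf x)=c\,M_1M_2\cdots M_df(\mathbf x)
\]
with a large absolute constant $c$ to be fixed at the end. With this choice the distributional bound \e{a47} is exactly the Jessen--Marcinkiewicz--Zygmund weak-type estimate for the strong maximal operator, which maps $\Log_{d-1}(L)(\ZT^d)$ into weak $L^1$ with $|\{\mathfrak Mf>\lambda\}|\lesssim\|f\|_{\Log_{d-1}}/\lambda$; rescaling $F$ by $c$ only affects the implied constant in \e{a47}. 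Thus the entire content of the theorem is the exponential integral \e{a40}.

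For the reduction, recall the classical identity expressing the one-dimensional Dirichlet sum through the conjugation operator and the modulations $(\mathcal E_mg)(x)=e^{imx}g(x)$: both $S_nf$ and $\tilde S_nf$ are fixed linear combinations, with unimodular coefficients, of $f$ itself and of one-dimensional conjugate functions of the modulated data $\mathcal E_{\pm n}f$. Since $|\mathcal E_mf|=|f|$ pointwise, modulation leaves each $M_if$, and hence $\mathfrak Mf$, unchanged. Because the kernel of the $d$-dimensional square sum is the tensor product of one-dimensional Dirichlet kernels, the operators factor as $S_n=S_n^{(1)}\cdots S_n^{(d)}$, the conjugate sums replacing some factors by their conjugates, so $|S_nf|+|\tilde S_nf|$ is dominated by a finite sum of $d$-fold (possibly partial) conjugate functions applied to modulations of $f$. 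Consequently \e{a40} reduces to the single, \emph{$n$-independent} family
\[
\int_{\ZT^d}\exp\left(\frac{|\widetilde g_{\varepsilon}(\mathbf x)|}{c\,\mathfrak Mg(\mathbf x)}\right)d\mathbf x\lesssim1,
\]
where $g$ ranges over functions with $|g|=|f|$, $\widetilde g_\varepsilon$ is the iterated conjugate function of $g$ taken in the coordinates $i$ with $\varepsilon_i=1$, and $\mathfrak Mg=\mathfrak Mf$.

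I would prove this by iterating \e{a39} one coordinate at a time. Freezing $(x_2,\dots,x_d)$ and applying \e{a39} in $x_1$ to $g(\cdot,x_2,\dots,x_d)$ controls conjugation in $x_1$ by $M_1$ of the data; integrating in the remaining variables and repeating in $x_2,\dots,x_d$ should formally produce the denominator $M_1\cdots M_df=\mathfrak Mf$. The main obstacle is that this peeling cannot proceed through a pointwise bound --- the conjugate function is not dominated by the maximal function pointwise --- so at the $j$-th step one must push the exponential estimate already obtained in $x_1,\dots,x_{j-1}$ through the \emph{transverse} maximal operator $M_j$, the intermediate denominators being conjugate functions rather than maximal functions. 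The heart of the argument is therefore a stability lemma asserting that \e{a39} survives the action of a maximal operator in a different variable: if $\int_{\ZT}\exp(|u|/v)\,dx_i\le B$ for each value of the remaining variables, then $M_ju$ (for $j\ne i$) obeys $\int_{\ZT}\exp(c'|M_ju|/M_jv)\,dx_i\le B'$.

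I expect the principal difficulty to be proving this stability lemma \emph{without losing the linear rate} in the exponential: a crude good-$\lambda$ composition of the $d$ one-dimensional tails would only yield a subexponential bound of type $\exp(-c\lambda^{1/d})$ --- consistent with the expected sharp rate \e{a37} for the $\Phi$-problem, but strictly weaker than the genuine exponential \e{a40} --- because the supremum over intervals defining $M_j$ couples the two variables and prevents the two exponential integrabilities from simply multiplying. Once the stability lemma is available at the level of the exponential integral, iterating it across the $d$ coordinates yields a tail bound $|\{\mathbf x:\,|S_nf(\mathbf x)|+|\tilde S_nf(\mathbf x)|>\lambda\,\mathfrak Mf(\mathbf x)\}|\lesssim e^{-c_0\lambda}$; taking $F=c\,\mathfrak Mf$ with $c_0c>1$ and integrating $e^{\lambda}$ against this distribution converts the tail bound into \e{a40}, completing the proof.
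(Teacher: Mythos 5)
Your reduction step is fine as far as it goes: with $F=c\,\mathfrak{M}f$, $\mathfrak{M}=M_1\cdots M_d$, the distributional bound \e{a47} does follow from the Jessen--Marcinkiewicz--Zygmund weak-type estimate (after normalizing $\|f\|_{\Log_{d-1}}=1$ and using \e{a73}), and since the square-sum kernel is a tensor product of Dirichlet kernels, $|S_nf|+|\tilde S_nf|$ is indeed dominated by finitely many iterated partial conjugates of modulations of $f$. But the entire proof then rests on the ``stability lemma'' that you state and never prove, and this is not a routine missing detail --- it is the whole difficulty of the theorem, and as formulated it almost certainly fails. Concretely: after applying \e{a39} in $x_1$, the object to be conjugated in $x_2$ is $H_1(e^{\pm in\cdot}f)$ (a conjugate function, not dominated pointwise by $M_1f$), and \e{a39} in $x_2$ produces the denominator $M_2H_1f$, not $M_2M_1f$; your fix asks that exponential integrability of $|u|/v$ in $x_1$, uniform in $x_2$, survive the replacement of $u,v$ by $M_2u,M_2v$. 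Even after the natural Jensen reduction --- one first couples the two suprema via $M_2u/M_2v\le\sup_{I\ni x_2}(\int_I|u|)/(\int_Iv)$ and then bounds $\exp$ of the right side by the $v$-weighted average of $\exp(|u|/v)$ over $I$ --- the lemma amounts to an $L^1(dx_1)$ bound for a (weighted) maximal operator acting in $x_2$, applied to a positive function about which one only knows uniform-in-$x_2$ bounds in $L^1(dx_1)$. Maximal operators are not bounded on $L^1$, positivity precludes any cancellation, and the supremum over $x_2$-intervals cannot be interchanged with $\int dx_1$. Your own observation that a distributional composition only returns tails of order $\exp(-c\lambda^{1/d})$ is the symptom: nothing in the proposal recovers the linear exponent, and that linear exponent is precisely what separates \e{a40} from the much weaker subexponential statement.

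This obstruction is exactly what the paper's proof is engineered to avoid, and it avoids it by never using the strong maximal function at all: no claim of the form $\int\exp(c|S_nf|/\mathfrak{M}f)\lesssim1$ appears anywhere. Instead, the exponential estimate is proved for $U$-type operators (the modified sums $S_n^*$, $\tilde S_n^*$) by induction on the dimension. In the inductive step a product-to-sum identity converts $\phi_{d-1}(nt_{d-1})\phi_d(nt_d)$ into $\cos n(t_{d-1}\mp t_d)$; after a change of variables and the identity $\frac1{\tg(u+v)\tg v}=\frac1{\tg u\,\tg v}-\frac1{\tg u\,\tg(u+v)}-1$, the $d$-dimensional operator splits into three $(d-1)$-dimensional $U$-type operators applied not to $f$ but to the fixed, $n$-independent auxiliary function $g$ of \e{a34}, a one-dimensional conjugate of $f$ in a diagonal direction; \lem{L1} (Zygmund's $\Log_k\to\Log_{k-1}$ bound) keeps $\|g\|_{\Log_{d-2}}$ under control, which is what closes the induction. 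The denominator $F$ is then assembled recursively from the weak-$(1,1)$ functions produced at each stage --- only the distributional property \e{a47} survives, with no pointwise identification as a maximal function --- and \lem{L2} (Sj\"olin's argument) transfers the estimate from $S_n^*$ to $S_n$ at the cost of an integrable error. To salvage your plan you would either have to prove your stability lemma, which I do not believe is true, or restructure it so that the one-dimensional inequality \e{a39} is always applied to an honest conjugate of a fixed auxiliary function, which is in effect what the paper does.
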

The relation $a\lesssim b$ in the theorem and in the sequel stands for the inequality $a\leq c\cdot b$, where
$c$ is a constant that can depend only on the dimension $d$.
\begin{cor}\label{C1}
	For any $f\in \Log_{d-1}(L)(\ZT^d)$ and $\varepsilon>0$ there exists a set $E=E_{f,\varepsilon}\subset \ZT^d$ such that
	\begin{align}
	&|E_{f,\varepsilon}|>(2\pi)^d-\varepsilon,\label{a54}\\
	&\int_{E_{f,\varepsilon}}\exp\left(\gamma\varepsilon\frac{ |S_nf({\bf x})|+|\tilde S_nf({\bf x})|}{\|f\|_{\Log_{d-1}(\ZT^d)}}\right)d{\bf x}\lesssim 1,\quad n=1,2,\ldots,\label{a27}
	\end{align}
	where $\gamma>0$ is a constants depended on the dimension $d$.
\end{cor}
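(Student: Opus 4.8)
The plan is to deduce \coro{C1} from \trm{T1} by truncating the weight $F$ at a level chosen so that its exceptional set has measure at most $\varepsilon$, and then exploiting the monotonicity of the exponential on the complementary set. I would begin by applying \trm{T1} to the given $f$ to produce the function $F>0$ satisfying \e{a47} and \e{a40}. Let $c_0$ denote the (dimensional) constant hidden in \e{a47}, so that $|\{F>\lambda\}|\le c_0\|f\|_{\Log_{d-1}(\ZT^d)}/\lambda$ for every $\lambda>0$.

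Next I would fix the truncation level. Setting
\[
\lambda_\varepsilon=\frac{2c_0\,\|f\|_{\Log_{d-1}(\ZT^d)}}{\varepsilon},
\]
the weak-type bound \e{a47} gives $|\{F>\lambda_\varepsilon\}|\le \varepsilon/2$. I then define
\[
E=E_{f,\varepsilon}=\{\mathbf{x}\in\ZT^d:\,F(\mathbf{x})\le \lambda_\varepsilon\},
\]
so that $|E|\ge (2\pi)^d-\varepsilon/2>(2\pi)^d-\varepsilon$, which is exactly \e{a54}.

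It remains to verify \e{a27}. Write $g(\mathbf{x})=|S_nf(\mathbf{x})|+|\tilde S_nf(\mathbf{x})|\ge 0$ and set $\gamma=1/(2c_0)$, a constant depending only on $d$. For $\mathbf{x}\in E$ one has $F(\mathbf{x})\le\lambda_\varepsilon$, hence
\[
\frac{\gamma\varepsilon}{\|f\|_{\Log_{d-1}(\ZT^d)}}=\frac{\varepsilon}{2c_0\,\|f\|_{\Log_{d-1}(\ZT^d)}}=\frac{1}{\lambda_\varepsilon}\le\frac{1}{F(\mathbf{x})}.
\]
Multiplying by $g(\mathbf{x})\ge 0$ and using that $\exp$ is increasing, for every $n$ and every $\mathbf{x}\in E$ I obtain
\[
\exp\left(\gamma\varepsilon\frac{g(\mathbf{x})}{\|f\|_{\Log_{d-1}(\ZT^d)}}\right)\le\exp\left(\frac{g(\mathbf{x})}{F(\mathbf{x})}\right).
\]
Integrating over $E$ and then enlarging the domain to all of $\ZT^d$ reduces the left-hand side of \e{a27} to the integral in \e{a40}, which is $\lesssim 1$ uniformly in $n$ by \trm{T1}. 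This yields \e{a27} and completes the deduction.

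I do not expect a genuine obstacle here, since all the analytic content is carried by \trm{T1}, and the corollary is a routine level-set argument. The only point requiring care is the simultaneous bookkeeping of the two constants: choosing $\lambda_\varepsilon$ large enough to trap all but an $\varepsilon$-portion of the mass through \e{a47}, while keeping $\gamma$ small enough that $\gamma\varepsilon/\|f\|_{\Log_{d-1}(\ZT^d)}$ stays below $1/F$ on $E$, so that both \e{a54} and \e{a27} hold for the same set $E$.
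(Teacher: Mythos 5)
Your proposal is correct and follows essentially the same route as the paper: the paper also defines $E_{f,\varepsilon}=\{\mathbf{x}:F(\mathbf{x})\le \|f\|_{\Log_{d-1}(\ZT^d)}/(\gamma\varepsilon)\}$, uses the weak-type bound \eqref{a47} to control $|(E_{f,\varepsilon})^c|$, and then bounds the integral in \eqref{a27} by the integral in \eqref{a40} via monotonicity of the exponential. Your explicit bookkeeping with $c_0$ and $\gamma=1/(2c_0)$ is just a spelled-out version of the paper's choice of $\gamma$.
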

\begin{cor}\label{C2}
For any $f\in\Log_{d-1}(L)(\ZT^d)$ and $\varepsilon >0$ there exists a set $E_{f,\varepsilon}\subset \ZT^d$ such that the relations
\begin{align}
&\lim_{n\to \infty}\int_{E_{f,\varepsilon}}(\exp(A|S_nf({\bf x})-f({\bf x})|)-1)d{\bf x}=0,\label{a58}\\
&\lim_{n\to \infty}\int_{E_{f,\varepsilon}}(\exp(A|\tilde S_nf({\bf x})-\tilde f({\bf x})|)-1)d{\bf x}=0.\label{a59}
\end{align}
hold for any $A>0$.
\end{cor}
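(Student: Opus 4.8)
The plan is to combine the exponential estimate of \coro{C1} with the convergence in measure of the square partial sums. Recall that for $f\in\Log_{d-1}(L)(\ZT^d)$ one has $S_nf\to f$ in measure as $n\to\infty$, since this is \e{a75} specialized to the diagonal $\textbf{n}=(n,\ldots,n)$, and likewise the conjugate sums satisfy the analogous relation $\tilde S_nf\to\tilde f$ in measure. The integrand in \e{a58} is nonnegative and monotone in $A$, so for an arbitrary $A>0$ it suffices to fix an integer $k>A$ and bound $\exp(A|\cdot|)\le\exp(k|\cdot|)$; the whole task reduces to producing one set $E$ on which, for each integer $k$, the family $\{|S_nf-f|\}_n$ is exponentially integrable with exponent $k$. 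Convergence in measure gives $\exp(A|S_nf-f|)-1\to 0$ in measure, and uniform integrability (furnished by an $L^{k/A}$-bound with $k/A>1$) then upgrades this to convergence in $L^1(E)$, which is exactly \e{a58}.

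The obstruction is that \coro{C1} controls the exponent only through the ratio $\gamma\varepsilon/\|f\|_{\Log_{d-1}(\ZT^d)}$, which cannot be enlarged for $f$ itself without shrinking the set. I circumvent this by approximation. Since trigonometric polynomials are dense in $\Log_{d-1}(L)(\ZT^d)$ (the Young function $\Log_{d-1}$ satisfies the $\Delta_2$ condition), for each integer $k\ge 1$ I write $f=P_k+h_k$ with $P_k$ a trigonometric polynomial and $\|h_k\|_{\Log_{d-1}(\ZT^d)}<\delta_k$, where $\delta_k$ is chosen so small that $\gamma\varepsilon_k/\delta_k\ge 4k$, with $\varepsilon_k=\varepsilon\,2^{-k}$. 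Applying \coro{C1} to $h_k$ with deficit $\varepsilon_k$ produces a set $E_k$ with $|\ZT^d\setminus E_k|<\varepsilon_k$ and, because the effective exponent exceeds $4k$,
\begin{equation*}
\int_{E_k}\exp\bigl(4k(|S_nh_k|+|\tilde S_nh_k|)\bigr)\lesssim 1,\qquad n=1,2,\ldots.
\end{equation*}
Setting $E=\bigcap_{k\ge 1}E_k$ gives $|\ZT^d\setminus E|<\sum_k\varepsilon_k<\varepsilon$, so $|E|>(2\pi)^d-\varepsilon$ and \e{a54} holds.

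Now fix $A>0$ and an integer $k>A$. Since $P_k$ is a trigonometric polynomial, $S_nP_k=P_k$ for all $n$ larger than its degree, whence $S_nf-f=S_nh_k-h_k$ for all such $n$ and $|S_nf-f|\le|S_nh_k|+|h_k|$. From the bound above, $\{\exp(2k|S_nh_k|)\}_n$ is bounded in $L^2(E_k)$, hence uniformly integrable; combined with $S_nh_k\to h_k$ in measure, passage to the limit yields $\int_{E_k}\exp(2k|h_k|)\lesssim 1$ (alternatively one intersects $E_k$ with a set of almost full measure on which $|h_k|$ is bounded). By Cauchy--Schwarz, for $n$ large,
\begin{equation*}
\int_E\exp(k|S_nf-f|)\le\left(\int_{E_k}\exp(2k|S_nh_k|)\right)^{1/2}\left(\int_{E_k}\exp(2k|h_k|)\right)^{1/2}\lesssim 1
\end{equation*}
uniformly in $n$. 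Thus $\{\exp(A|S_nf-f|)\}_n$ is bounded in $L^{k/A}(E)$ with $k/A>1$, so it is uniformly integrable on $E$; since $|S_nf-f|\to 0$ in measure, this forces $\int_E(\exp(A|S_nf-f|)-1)\,d\textbf{x}\to 0$, establishing \e{a58}.

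The conjugate relation \e{a59} follows verbatim, replacing $S_n,f,h_k$ by $\tilde S_n,\tilde f,\tilde h_k$, using $\tilde S_nP_k=\tilde P_k$ for large $n$ and the convergence in measure $\tilde S_nf\to\tilde f$; the required bound on $\exp(4k|\tilde S_nh_k|)$ is already contained in the displayed estimate. The genuinely delicate point, and the one I expect to be the main obstacle, is the global nature of the set: a single $E$ must serve every exponent $A$ simultaneously, which is precisely why the construction is forced to intersect the countably many sets $E_k$ arising from progressively finer approximations, the monotonicity reduction to integer $k$ together with $\sum_k\varepsilon_k<\varepsilon$ being what keeps the intersection harmless for the measure estimate.
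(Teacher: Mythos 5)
Your proof is correct, and its skeleton is the same as the paper's: approximate $f$ by trigonometric polynomials $P_k$ whose $\Log_{d-1}$-norm distance to $f$ is so small that the exponent $\gamma\varepsilon_k/\|f-P_k\|_{\Log_{d-1}(L)}$ exceeds a fixed multiple of $k$, apply \coro{C1} to $f-P_k$ with deficits $\varepsilon_k=\varepsilon 2^{-k}$, and intersect the resulting sets $E_k$. The genuine difference is the endgame. The paper adjoins to the intersection an Egorov-type set $G$ on which $|f-P_k|\le 1/2k$ and $|\tilde f-\tilde P_k|\le 1/2k$ uniformly (obtained from a.e. and norm convergence of the Ces\`aro means and their conjugates); then the elementary inequality $\phi(ab)\le a\phi(b)$ for $\phi(t)=e^t-1$, $0<a<1$, yields the quantitative bound that the integral in \e{a58} is $\lesssim A/k$ uniformly in large $n$, for every integer $k>A$, and letting $k\to\infty$ finishes. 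You dispense with $G$ entirely: you recover the missing ingredient $\int_{E_k}\exp\left(2k|f-P_k|\right)\lesssim 1$ from the partial-sum bounds themselves via Fatou (using $S_n(f-P_k)\to f-P_k$ in measure), and you conclude by Vitali's theorem (uniform $L^{k/A}$-boundedness with $k/A>1$ plus convergence in measure). This is a legitimate and rather elegant soft replacement, but note the trade in citations: the paper leans on a.e. convergence of Ces\`aro means and conjugate Ces\`aro means, while your argument leans on convergence in measure of the conjugate square sums $\tilde S_nf\to\tilde f$, which the paper never states --- \e{a75} is formulated only for $S_{\bf n}$ --- and which you assert with the word ``likewise''; it is a classical theorem (Zhizhiashvili), so it is on the same footing as what the paper invokes, but it should be cited explicitly, since both your Fatou step for the conjugate and the final Vitali step for \e{a59} depend on it. Two minor points: $\sum_{k\ge1}\varepsilon 2^{-k}=\varepsilon$, not $<\varepsilon$, though the strict inequalities $|\ZT^d\setminus E_k|<\varepsilon_k$ still give $|\ZT^d\setminus E|<\varepsilon$; and your route, unlike the paper's, produces no rate, whereas the paper's computation shows the $\limsup$ in \e{a58} is $O(A/k)$ for every $k$.
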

\begin{cor}\label{C3}
Let $n_k$ be an arbitrary sequence of integers. Then every function $f\in \Log_{d-1}(L)(\ZT^d)$ satisfies the relations
\begin{equation}\label{a66}
S_{n_k}f({\bf x})=o(\log k),\quad \tilde S_{n_k}f({\bf x})=o(\log k) 
\end{equation}
almost everywhere as $k\to\infty$.
\end{cor}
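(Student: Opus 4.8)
The plan is to deduce both relations in \eqref{a66} from Theorem~\ref{T1}, by combining the exponential integrability \eqref{a40} with a Borel--Cantelli argument and then upgrading the resulting $O(\log k)$ bound to $o(\log k)$ through approximation. Throughout, for a function $\phi$ write $G_n\phi=|S_n\phi|+|\tilde S_n\phi|$, so that \eqref{a40} reads $\int_{\ZT^d}\exp(G_n\phi/F_\phi)\lesssim1$ for the function $F_\phi$ supplied by Theorem~\ref{T1}. Since $G_n\phi$ dominates both $|S_n\phi|$ and $|\tilde S_n\phi|$, it suffices to prove $G_{n_k}f(\mathbf{x})=o(\log k)$ almost everywhere.

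First I would record the following consequence, valid for \emph{any} $\phi\in\Log_{d-1}(L)(\ZT^d)$. Applying Chebyshev's inequality to $\exp(G_{n_k}\phi/F_\phi)$ gives
\md0
|\{\mathbf{x}:\,G_{n_k}\phi(\mathbf{x})>2F_\phi(\mathbf{x})\log k\}|=|\{\mathbf{x}:\,\exp(G_{n_k}\phi/F_\phi)>k^2\}|\lesssim k^{-2}.
\emd
As $\sum_k k^{-2}<\infty$, the Borel--Cantelli lemma shows that for a.e.\ $\mathbf{x}$ the inequality $G_{n_k}\phi(\mathbf{x})\le 2F_\phi(\mathbf{x})\log k$ holds for all large $k$. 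Since $F_\phi$ is positive and finite a.e.\ (by \eqref{a47}), this yields
\md0
\limsup_{k\to\infty}\frac{G_{n_k}\phi(\mathbf{x})}{\log k}\le 2F_\phi(\mathbf{x})\quad\text{for a.e.\ }\mathbf{x}.
\emd
Note that this step only produces $G_{n_k}\phi=O(\log k)$, not the desired $o(\log k)$.

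The main obstacle is precisely this loss: the exponential tail $e^{-\lambda}$ only becomes summable in $k$ at the scale $\lambda\sim\log k$, so the first step alone cannot give little-$o$. To overcome it I would use the linearity of $S_n,\tilde S_n$ together with the fact that the weak bound \eqref{a47} scales with the norm. Given $\delta>0$, choose a trigonometric polynomial $g$ with $\|f-g\|_{\Log_{d-1}(\ZT^d)}<\delta$; such a $g$ exists because $\Log_{d-1}$ satisfies the $\Delta_2$-condition, so trigonometric polynomials are dense in the Luxemburg norm. Put $h=f-g$. Since $g$ is a trigonometric polynomial, $S_ng$ and $\tilde S_ng$ are uniformly bounded in $n$ and $\mathbf{x}$, hence $G_{n_k}g(\mathbf{x})/\log k\to0$ everywhere. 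Using $G_{n_k}f\le G_{n_k}g+G_{n_k}h$ and applying the previous step to $h$, I obtain
\md0
\Psi(\mathbf{x}):=\limsup_{k\to\infty}\frac{G_{n_k}f(\mathbf{x})}{\log k}\le 2F_h(\mathbf{x})\quad\text{for a.e.\ }\mathbf{x}.
\emd

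Finally I would remove the dependence on $\delta$. The weak estimate \eqref{a47} applied to $h$ gives, for every $t>0$,
\md0
|\{\mathbf{x}:\,\Psi(\mathbf{x})>t\}|\le|\{\mathbf{x}:\,F_h(\mathbf{x})>t/2\}|\lesssim\frac{\|h\|_{\Log_{d-1}(\ZT^d)}}{t}<\frac{\delta}{t}.
\emd
Since $\delta>0$ is arbitrary while the left-hand side does not depend on $\delta$, we conclude $|\{\mathbf{x}:\,\Psi(\mathbf{x})>t\}|=0$ for every $t>0$, and therefore $\Psi=0$ almost everywhere. This is exactly $G_{n_k}f(\mathbf{x})=o(\log k)$ a.e., which contains both assertions of \eqref{a66}, since $G_{n_k}f$ bounds $|S_{n_k}f|$ and $|\tilde S_{n_k}f|$ separately. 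I expect the only delicate points to be the density of trigonometric polynomials in the Orlicz norm and the bookkeeping that the exceptional null set from the Borel--Cantelli step (which depends on $g$, hence on $\delta$) is harmless because it is absorbed by the measure estimate in the last display.
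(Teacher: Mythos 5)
Your proof is correct, and it rests on the same two pillars as the paper's own argument: approximation of $f$ by a trigonometric polynomial in the $\Log_{d-1}$ norm, followed by a Chebyshev plus Borel--Cantelli argument at threshold of order $\log k$, fed by the exponential estimate applied to the approximation error. The organization, however, differs in a real way. The paper routes everything through Corollary~\ref{C1}: it works on the explicit large set $E_{f,\varepsilon}$ and exploits the smallness of $\|f-P\|_{\Log_{d-1}}$ by letting it inflate the constant $\gamma\varepsilon/\|f-P\|_{\Log_{d-1}}$ inside the exponential, so that the event $\{|S_{n_k}(f-P)|+|\tilde S_{n_k}(f-P)|>\varepsilon\log k\}$ lies above level $k^2$. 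You instead invoke Theorem~\ref{T1} directly, establish the reusable intermediate fact that $\limsup_k\bigl(|S_{n_k}\phi|+|\tilde S_{n_k}\phi|\bigr)/\log k\le 2F_\phi$ a.e.\ for every $\phi$, and convert smallness of $\|f-g\|_{\Log_{d-1}}$ into smallness of the limsup through the weak-type bound \eqref{a47} on $F_{f-g}$. The two mechanisms are essentially equivalent---the set of Corollary~\ref{C1} is precisely a sublevel set of the majorant $F$ from Theorem~\ref{T1}---but your bookkeeping is a bit cleaner: with your fixed exponent constant, all the $\delta$-dependence sits in the final measure estimate $|\{\Psi>t\}|\lesssim\delta/t$, whereas in the paper's scaling the Chebyshev step, as written, only yields $|E_k|\lesssim k^{-2\varepsilon}$ (one needs $\|f-P\|_{\Log_{d-1}}<\gamma\varepsilon^2/2$ rather than the stated $\gamma\varepsilon/2$ to reach the level $k^2$), a harmless slip that your route never encounters. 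Your two side points---density of trigonometric polynomials in the Luxemburg norm (the paper uses the same fact, via Ces\`aro means, in its proofs of Corollaries~\ref{C2} and~\ref{C3}) and the harmlessness of the $g$-dependent exceptional null sets---are both handled correctly.
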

Note that the counterexamples given by Konyagin \cite {Kon} and Getsadze \cite{Get} prove that the space $\Log_{d-1}(L)(\ZT^d)$ is the widest Orlicz class, where such properties hold. 

We prove \trm {T1} reducing it to the one dimensional case. This is a well known argument first used by Sj\"{o}lin in \cite{Sjo} in the proof of multiple version of Carleson's theorem. 

\section{Notations and lemmas}\label{S2}
According to a theorem from \cite{KrRu} (see chap. 2, theorem 9.5) the Luxemburg norm satisfies the relations
\begin{align}  
\| f\|_{\Log_k(L)}\le 1\Rightarrow \int_{\ZT^d}\Log_k(f)\le \| f\|_{\Log_k(L)},\label{a70}\\
\| f\|_{\Log_k(L)}\ge 1\Rightarrow \int_{\ZT^d}\Log_k(f)\ge \| f\|_{\Log_k(L)}.\label{a71}
\end{align}
In fact, such inequalities hold not only for logarithmic, but as well for general Luxemburg norms. Applying \e {a70} and \e {a71}, one can easily check that for any $f\in\Log_k(\ZT^d)$ it holds the inequality
\begin{equation}\label{a72}
\| f\|_{\Log_k(L)}\lesssim 1+\int_{\ZT^d}\Log_k(f).
\end{equation}
Besides, if in addition $\| f\|_{\Log_k(L)}=1$, then we have both upper and lower bounds
\begin{equation}\label{a73}
1+\int_{\ZT^d}\Log_k(f)\lesssim \| f\|_{\Log_k(L)}=1\lesssim 1+\int_{\ZT^d}\Log_k(f).
\end{equation}
The one dimensional conjugate function of $f\in L^1(\ZT)$ is defined by
\begin{equation}\label{a68}
\tilde f(x)=\text{p.v.}\frac{1}{\pi }\int_\ZT \frac{f(x+t)}{2\tg(t/2)}dt=\lim_{\varepsilon\to 0}\frac{1}{\pi }\int_{\varepsilon<|t|<\pi} \frac{f(x+t)}{2\tg(t/2)}dt.
\end{equation}
It is well known that $\tilde f(x)$ is a.e. defined for Lebesgue integrable functions and it holds the inequality 
\md1\label{Zygmund}
\int_\ZT\Log_{k-1}(\tilde f)\lesssim 1+\int_\ZT\Log_{k}(f),\quad k=1,2,\ldots ,
\emd
(see \cite{Zyg2}, chap. 7). We will need this inequality in the following form.
\begin{lemma}\label{L1}
	If $f\in \Log_k(L)(\ZT^d)$, $k=0,1,\ldots $, then the function
	\begin{equation*}
	g(x_1,x_2,\ldots,x_d)=\text{p.v.}\int_{\ZT}\frac{f(x_1+t,x_2+t,x_3,\ldots, x_d)}{\tg(t/2)}dt
	\end{equation*}
is a.e. defined on $\ZT^d$ and satisfies the bound
\begin{equation*}
\int_{\ZT^d}\Log_{k-1}(\tilde g)\lesssim 1+\int_{\ZT^d}\Log_{k}(f).
\end{equation*}
\end{lemma}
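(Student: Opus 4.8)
The plan is to reduce the statement to the one-dimensional Zygmund inequality \e{Zygmund}. The kernel $1/\tg(t/2)$ together with the \emph{simultaneous} shift of $x_1$ and $x_2$ by $t$ shows that $g$ is, up to the constant factor $2\pi$, the one-dimensional conjugate function of $f$ taken along the diagonal direction $(1,1,0,\ldots,0)$. To expose this I would first perform the change of variables $(x_1,x_2)\mapsto (s,w)$ with $s=x_1-x_2$ and $w=x_2$, which is a measure-preserving automorphism of $\ZT^2$ of Jacobian $1$. Under the shift $(x_1,x_2)\mapsto(x_1+t,x_2+t)$ the difference $s$ is unchanged while $w$ advances by $t$, so in these coordinates $g$ takes the form $2\pi\,\widetilde{\psi}(w)$, where $\psi(w)=f(s+w,w,x_3,\ldots,x_d)$ and the tilde denotes the one-dimensional conjugation \e{a68} in the variable $w$, with $(s,x_3,\ldots,x_d)$ playing the role of parameters.

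With this reduction in hand, the a.e.\ existence of $g$ follows from Fubini and the classical fact that the conjugate of an $L^1(\ZT)$ function is defined a.e.: since $f\in \Log_k(L)(\ZT^d)\subset L^1(\ZT^d)$, for a.e.\ value of $(s,x_3,\ldots,x_d)$ the slice $\psi$ lies in $L^1(\ZT)$, hence $\widetilde{\psi}(w)$ exists for a.e.\ $w$, and Fubini transfers this back to a.e.\ existence of $g$ on $\ZT^d$.

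For the integral bound I would integrate $\Log_{k-1}(g)$ in the new coordinates, apply \e{Zygmund} slice by slice, and integrate back. After the change of variables,
\begin{equation*}
\int_{\ZT^d}\Log_{k-1}(g)\,d{\bf x}=\int_{\ZT^{d-1}}\left(\int_\ZT \Log_{k-1}(2\pi\widetilde{\psi}(w))\,dw\right)ds\,dx_3\cdots dx_d .
\end{equation*}
The normalizing constant $2\pi$ is harmless, since $\Log_{k-1}(2\pi u)\lesssim 1+\Log_{k-1}(u)$ pointwise; applying \e{Zygmund} to the inner integral gives $\int_\ZT \Log_{k-1}(\widetilde{\psi})\lesssim 1+\int_\ZT\Log_k(\psi)$ for a.e.\ value of the parameters. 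Integrating this over $(s,x_3,\ldots,x_d)\in\ZT^{d-1}$ produces a constant from the "$1$" terms together with $\int_{\ZT^{d-1}}\int_\ZT\Log_k(\psi)\,dw\,ds\,dx_3\cdots dx_d$, which upon reversing the measure-preserving change of variables equals $\int_{\ZT^d}\Log_k(f)\,d{\bf x}$. This yields exactly $\int_{\ZT^d}\Log_{k-1}(g)\lesssim 1+\int_{\ZT^d}\Log_k(f)$.

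The routine parts are the two changes of variables and the absorption of the constant $2\pi$ and the additive ``$1$''. The one point genuinely requiring care is the passage to slices: one must check that the parameter values for which $\psi\notin L^1(\ZT)$, or for which $\int_\ZT\Log_k(\psi)=\infty$, form a null set in $\ZT^{d-1}$ (both follow from Fubini applied to $f\in L^1$ and to $\int_{\ZT^d}\Log_k(f)<\infty$), and that the principal value defining $g$ may be legitimately computed on these slices. Note finally that for $k=0$ the inequality \e{Zygmund} is unavailable, as conjugation is only of weak type $(1,1)$; in that case the assertion is confined to the a.e.\ existence statement established above.
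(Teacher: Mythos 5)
Your proof is correct and is essentially the paper's own argument: the paper states Lemma \ref{L1} as a direct reformulation of the one-dimensional Zygmund inequality \e{Zygmund}, and your shear change of variables $(x_1,x_2)\mapsto(x_1-x_2,x_2)$, which turns the diagonal shift into a one-variable translation so that $g=2\pi\widetilde{\psi}$ slice by slice, followed by Fubini and the slice-wise application of \e{Zygmund}, is exactly the intended reduction. You also correctly read the ``$\tilde g$'' in the stated conclusion as a typo for $g$ (consistent with how the lemma is iterated in the proof of Lemma \ref{L2}), and your caveat about $k=0$ is appropriate since the integral bound is only ever invoked for $k\ge 1$.
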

The $d$-dimensional conjugate of a function $f\in L^1(\ZT^d)$ is the consecutive application of \e {a68} with respect to each variables of $f$. That is
\begin{equation}\label{a69}
\tilde f(\textbf{x})=\text{p.v.}\frac{1}{\pi}\int_\ZT\ldots \text{p.v.}\frac{1}{\pi}\int_\ZT f(\textbf{x}+\textbf{t})dt_1\ldots dt_d.
\end{equation}
The $d$-dimensional conjugate function $\tilde f$ is a.e. defined for any $f\in \Log_{d-1}(\ZT^d)$. Note that the function $\tilde f(\textbf{x})$ remains the same with respect to any order of integrations in \e {a69}. In the sequel all the integrals will be understood in the sense of principal value and we will omit the notation p.v. before the integrals. Two dimensional case of the following lemma was proved in \cite {GGK}.    
It makes possible to use the modified partial sums 
\begin{align*}
&S_n^*f({\bf x})=\frac{1}{\pi^d}\int_{\ZT^d}\prod_{k=1}^d\frac{\sin nt_k}{2\tg (t_k/2)}f({\bf x}+{\bf t})d{\bf t},\\
&\tilde S_n^*f({\bf x})=\frac{1}{\pi^d}\int_{\ZT^d}\prod_{k=1}^d\frac{\cos nt_k-1}{2\tg (t_k/2)}f({\bf x}+{\bf t})d{\bf t}
\end{align*}
in the proof of the theorem. 
\begin{lemma}\label{L2}
If $f\in\Log_{d-1}(L)(\ZT^d)$, then
\begin{align}
&\int_{\ZT^d}\sup_n\left|S_{n}f({\bf x})-S^*_{n}f({\bf x})\right|d{\bf x}\lesssim  \|f\|_{\Log_{d-1}(L)(\ZT^d)},\label{a26}\\
&\int_{\ZT^d}\sup_n\left|\tilde S_{n}f({\bf x})-\tilde S^*_{n}f({\bf x})\right|d{\bf x}\lesssim  \|f\|_{\Log_{d-1}(L)(\ZT^d)}.\label{a38}
\end{align}
\end{lemma}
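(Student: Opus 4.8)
The plan is to push the whole estimate onto the one-dimensional kernel identity and the scalar maximal theory. In a single variable the Dirichlet kernel $D_n(t)=\sin((n+\tfrac12)t)/\sin(t/2)$ and the modified kernel $D_n^*(t)=\sin(nt)/\tan(t/2)$ differ by exactly $\cos nt$, and an entirely analogous identity, with $\sin nt$ in place of $\cos nt$, relates the conjugate Dirichlet kernel to its modified counterpart. Since the $d$-dimensional kernels of $S_n$ and $S_n^*$ are the corresponding coordinatewise products and $\frac1{\pi^d}\prod_k\frac12=\frac1{(2\pi)^d}$, I would first record the expansion
\[
\prod_{k=1}^d D_n(t_k)-\prod_{k=1}^d\frac{\sin nt_k}{\tan(t_k/2)}=\sum_{\emptyset\ne B\subseteq\{1,\dots,d\}}\prod_{j\in B}\cos(nt_j)\prod_{k\notin B}\frac{\sin nt_k}{\tan(t_k/2)},
\]
so that $S_nf-S_n^*f$ is a finite sum, over nonempty $B$, of operators $V_{B,n}$ carrying a frequency-extracting factor $\cos(nt_j)$ in every coordinate $j\in B$ and a modified (conjugate-type) factor in every remaining coordinate. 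It therefore suffices to bound $\int_{\ZT^d}\sup_n|V_{B,n}f|\,d\x\lesssim\|f\|_{\Log_{d-1}(L)(\ZT^d)}$ for each fixed $B$; \e{a38} follows from the same argument with $\sin nt_j$ replacing $\cos nt_j$.

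Second, I would integrate out the coordinates in $B$. For $j\in B$ the operation $\frac{1}{2\pi}\int_\ZT\cos(nt_j)(\cdots)\,dt_j$ is a pure modulation reading off the $\pm n$-th Fourier coefficient of the remaining integrand in the variable $x_j$; hence $V_{B,n}f$ only involves frequencies $|k_j|=n$ for $j\in B$, i.e. it lives on the boundary edges of the cube $[-n,n]^d$. After this step $V_{B,n}f$ is a sum of terms equal to $e^{\pm i n\sum_{j\in B}x_j}$ times an iterated modified partial sum $S_n^*$ acting, in the $d-|B|\le d-1$ remaining variables, on a boundary slice $b_{\pm n}$ of $f$. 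The decisive structural gain is that $|B|\ge1$: these slices satisfy $\|b_{\pm n}\|_{L^1}\le\|f\|_{L^1}$ uniformly and, by Riemann--Lebesgue, $b_{\pm n}\to0$.

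Third, I would estimate the resulting maximal operator in the remaining coordinates. Writing $S_n^*g=\tfrac{1}{2i}\bigl(e_{-n}\widetilde{(e_ng)}-e_{n}\widetilde{(e_{-n}g)}\bigr)$, where $e_m(x)=e^{imx}$, turns each modified partial sum into a one-dimensional conjugate function of a modulated slice, and the scalar Carleson--Hunt maximal theorem controls $\sup_n$ of the corresponding ordinary partial sums on each slice. Each of the at most $d-1$ conjugations is then absorbed by Zygmund's inequality \e{Zygmund} (equivalently \lem{L1}), which trades one power of the logarithm for one conjugation; iterating it along the $d-1$ remaining coordinates carries $f\in\Log_{d-1}(L)(\ZT^d)$ down to an $L^1$ bound, and summing over the finitely many $B$ yields \e{a26}.

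The main obstacle is exactly the coupling in this last step: the supremum runs over a single $n$ that appears simultaneously as the \emph{order} of the partial sums in the remaining variables and as the \emph{index} of the boundary slice $b_{\pm n}$, so $\sup_n|V_{B,n}f|$ is a diagonal maximal operator and not a product of independent maximal operators. Taming it requires combining the Carleson--Hunt bound with the $L^1$-decay of the slices $b_{\pm n}$, while tracking the Orlicz bookkeeping so that precisely $d-1$ logarithms are consumed, matching $\Log_{d-1}(L)$ and no more. I expect the extreme term $|B|=d$ to be harmless, since then $V_{B,n}f$ is bounded by $\|f\|_{L^1}/(2\pi)^d$ uniformly in $n$ and needs no maximal estimate; the real difficulty is concentrated in the mixed terms $1\le|B|<d$.
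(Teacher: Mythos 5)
Your first step coincides with the paper's: both expand $\prod_k D_n(t_k)-\prod_k\frac{\sin nt_k}{2\tg(t_k/2)}$ into terms carrying $\cos(nt_j)$ in the coordinates of a nonempty set $B$ and modified kernels elsewhere. But from there your route has a genuine gap, and you have in fact located it yourself: after integrating out the $\cos(nt_j)$ factors you are left with $\sup_n$ of a \emph{diagonal} operator, a modified partial sum of order $n$ acting on the $n$-th boundary slice $b_{\pm n}$, and the tools you propose cannot close this. Riemann--Lebesgue decay of $\|b_{\pm n}\|_{L^1}$ is rateless for general integrable $f$, so it cannot be summed against or interpolated with any maximal bound; and the Carleson--Hunt maximal theorem is unavailable here twice over: the slices live only in $L^1$-type Orlicz classes where Carleson's operator is unbounded, and even where it applies it yields weak-type or $L^p$ ($p>1$) conclusions, never the \emph{strong} $L^1$ bound $\int_{\ZT^d}\sup_n|\cdot|\lesssim\|f\|_{\Log_{d-1}(L)}$ that \e{a26} requires. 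So the "main obstacle" you name is not a technicality to be tamed but the point where this strategy fails.

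The paper's resolution avoids creating the diagonal in the first place: do not integrate out the cosines. Instead, apply the product-to-sum trigonometric formulas to merge \emph{all} oscillating factors --- the $\cos(nt_j)$, $j\in B$, together with the $\sin(nt_k)$ sitting over the tangents --- into a single factor $\phi\bigl(n(\pm t_1\pm\cdots\pm t_d)\bigr)$, $\phi\in\{\sin,\cos\}$, over the common denominator $\prod_{k\notin B}\tg(t_k/2)$, as in \e{a24}. Because $B\neq\varnothing$, at least one variable carries no tangent denominator, so after the change of variables $u_1=\pm t_1\pm\cdots\pm t_d$, $u_k=t_k$ ($k\ge2$), the oscillation becomes $\phi(nu_1)$ while the denominators involve only the other variables. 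Bounding $|\phi(nu_1)|\le1$ then produces a majorant \emph{independent of $n$}, so the supremum over $n$ costs nothing, and what remains is an iterated principal-value (conjugate-function-type) integral in at most $d-1$ variables, which is bounded in $L^1$ by iterating \lem{L1} --- each iteration consuming one logarithm, exactly matching the class $\Log_{d-1}(L)(\ZT^d)$. No Carleson-type theorem enters anywhere; the entire maximal estimate is reduced to a pointwise bound by $1$ plus Zygmund's inequality.
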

\begin{proof}
Obviously, one can suppose that
\begin{equation}\label{a76}
\|f\|_{\Log_{d-1}(L)(\ZT^d)}=1.
\end{equation}
We shall only prove \e {a26}. The inequality \e {a38} can be proved similarly. We have
\begin{equation}\label{a20}
S_nf({\bf x})=\frac{1}{\pi^d}\int_{\ZT^d}\prod_{k=1}^dD_n(t_k)f({\bf x}+{\bf t})d{\bf t},
\end{equation} 
where 
\begin{equation}\label{a22}
D_n(x)=\frac{\sin(n+1/2)x}{2\sin(x/2)}=\frac{\sin nx}{2\tg (x/2)}+\frac{1}{2}\cos nx,
\end{equation}
is the Dirichlet kernel. Substituting \e {a22} in \e{a20}, the difference
\md0
S_{n}f({\bf x})-S^*_{n}f({\bf x})
\emd
turns to be a sum of several integrals of the form
\begin{equation}\label{a23}
\frac{1}{(2\pi)^d}\int_{\ZT^d}\prod_{k\in A}\frac{\sin nt_k}{\tg(t_k/2)}\prod_{k\in A^c}\cos (nt_k)\cdot f({\bf x}+{\bf t})d{\bf t},
\end{equation}
where $A\subsetneq \{1,2,\ldots, d\}$ is a subset of integers. Then applying the formulas of trigonometric functions products, each integral \e {a23} can be split into the sum of some integrals 
\begin{equation}\label{a24}
\frac{1}{(2\pi)^d}\int_{\ZT^d}\frac{\phi (n(\pm t_1\pm t_2\pm \ldots\pm t_d))}{2^{d-1}\prod_{k\in A}\tg (t_k/2)}\cdot f({\bf x}+{\bf t})d{\bf t},
\end{equation}
where $\phi$ is either sine or cosine function. This reduces the lemma to the estimation of the integrals \e {a24}. The case of $A=\varnothing$ is estimated by
\begin{equation*}
\frac{1}{(2\pi)^d}\int_{\ZT^d}|f({\bf x}+{\bf t})|d{\bf t}=\frac{\|f\|_{L^1}}{(2\pi)^d}\lesssim \|f\|_{L\log^{d-1}L}.
\end{equation*}
The cases $A\neq \varnothing$ of the integrals \e {a24} are estimated similarly. So we can restrict us only on the estimation of 
\begin{equation}\label{a25}
I_nf({\bf x})=\int_{\ZT^d}\frac{\sin n(t_1+ t_2+\ldots+ t_d)}{\prod_{k=l+1}^d\tg (t_k/2)}\cdot f({\bf x}+{\bf t})d{\bf t}
\end{equation}
corresponding to $A=\{1,\ldots,l\}$, $l\ge 1$. After the change of variables 
\begin{equation}\label{a28}
u_1=t_1+t_2+\ldots+t_d,\,  u_2=t_2,\ldots, \, u_d=t_d,
\end{equation}
from \e {a25} we get
\begin{align*}
|I_nf({\bf x})|&=\left|\int_{\ZT^d}\frac{\sin nu_1}{\prod_{k=l+1}^d\tg (u_k/2)}\cdot G({\bf x},{\bf u})d{\bf u}\right|\\
&=\int_{\ZT^{l}}\sin nu_1\left(\int_{\ZT^{d-l}} \frac{G({\bf x},{\bf u})}{\prod_{k=l+1}^d\tg (u_k/2)}\cdot du_{l+1}\ldots du_d\right)du_1\ldots du_{l}\\
&\le \int_{\ZT^{l}}\left|\int_{\ZT^{d-l}} \frac{G({\bf x},{\bf u})}{\prod_{k=l+1}^d\tg (u_k/2)}\cdot du_{l+1}\ldots du_d\right|du_1\ldots du_{l}
\end{align*}
where
\begin{equation}\label{a29}
G({\bf x},{\bf u})=f(x_1+u_1-u_2-\ldots-u_d,x_2+u_2,\ldots,x_d+u_d).
\end{equation}
The inner integral can be considered as a function on variables $x_k$, $k=1,2,\ldots,d$, and $u_j$, $j=1,2,\ldots l$. Moreover, the $(d-l)$-time iteration of \lem {L1} implies
\begin{align*}
\int_{\ZT^{d}}&\sup_n|I_n({\bf x})|d\textbf{x}\\
&\le \int_{\ZT^{d+l}}\left|\int_{\ZT^{d-l}} \frac{G({\bf x},{\bf u})}{\prod_{k=l+1}^d\tg (u_k/2)}\cdot du_{l+1}\ldots du_d\right|du_1\ldots du_l dx_1\ldots dx_d\\
&\lesssim 1+\int_{\ZT^{d+l}}\Log_{d-l}\left(|G({\bf x},u_1,\ldots, u_l,0,\ldots, 0)|\right)du_1\ldots du_l dx_1\ldots dx_d\\
&=1+(2\pi)^l\int_{\ZT^d}\Log_{d-l}(f)\\
&\lesssim \|f\|_{\Log_{d-1}(L)(\ZT^d)}=1
\end{align*}
that gives \e {a26}. Note that in the above estimations we use bound \e {a73} that is valid under assumption \e {a76}. Lemma is proved.
\end{proof}
\section{Proofs of main results}
\begin{proof}[Proof of \trm {T1}]
First we shall prove estimate \e {a40} for the operators
	\begin{equation}\label{a31}
	U_nf(\textbf{x})=\frac{1}{\pi^d}\int_{\ZT^d}\prod_{k=1}^d\frac{\phi_k(t_k)}{2\tg (t_k/2)}f({\bf x}+{\bf t})d{\bf t}.
	\end{equation} 
where $\phi_k$ is either sine or cosine function. We call them $U$-type operators. We do it by induction on dimension.
In the one dimensional case we have either
\begin{equation}
U_nf(x)=\frac{1}{\pi}\int_{\ZT}\frac{\sin nt}{2\tg (t/2)}f( x+t)dt
\end{equation}
or the same with the cosine function. Thus we get
\begin{align*}
U_nf(x)&=\frac{1}{\pi}\int_{\ZT}\frac{\sin n(t-x)}{2\tg ((t-x)/2)}f(t)dt\\
&= \frac{\cos nx}{\pi}\int_{\ZT}\frac{\sin nt\cdot f(t)}{2\tg ((t-x)/2)}dt-\frac{\sin nx}{\pi}\int_{\ZT}\frac{\cos nt\cdot f(t)}{2\tg ((t-x)/2)}dt 
\end{align*} 
The last integrals are conjugates of functions $\sin nt\cdot f(t)$ and $\cos nt\cdot f(t)$ respectively. Besides, their maximal functions can be dominated by $Mf(x)$. Thus, taking $F(x)=Mf(x)$, from inequality \e {a39} we conclude 
\begin{equation*}
\int_\ZT\exp\left(c_1\frac{U_nf(x)}{F(x)}\right)dx<c_2.
\end{equation*}
On the other hand weak-$L^1$ inequality of maximal function implies 
\begin{equation*}
\{x\in\ZT:\,F(x)>\lambda \}\lesssim \frac{\|f\|_{L^1}}{\lambda}
\end{equation*}
which completes the estimate in one dimension.

 Suppose that the exponential estimate for operators \e {a31} in $d-1\ge 2$ dimension holds. Take a function $f\in \Log_{d-1}(\ZT^d)$ satisfying
 \begin{equation}\label{a74}
 	\| f\|_{\Log_{d-1}(L)(\ZT^d)}=1.
 \end{equation} 
 Then we have
\begin{align*}
U_nf({\bf x})&=\frac{1}{\pi^d}\int_{\ZT^d}\prod_{k=1}^d\frac{\phi_k (nt_k)}{2\tg (t_k/2)}\cdot f({\bf x}+{\bf t})d{\bf t}\\
&=\frac{1}{\pi^d}\int_{\ZT^d}\prod_{k=1}^{d-2}\frac{\phi_k (nt_k)}{2\tg (t_k/2)}\cdot \frac{\phi_{d-1} (nt_{d-1})\cdot \phi_d (nt_d)}{4\tg (t_{d-1}/2)\tg (t_d/2)}\cdot f({\bf x}+{\bf t})d{\bf t}.
\end{align*} 
Without loss of generality we can suppose that $\phi_{d-1}$ and $\phi_d$ are both sine functions. Thus we obtain
\begin{align*}
U_nf({\bf x})&=\frac{1}{2\pi^d}\int_{\ZT^d}\prod_{k=1}^{d-2}\frac{\phi_k (nt_k)}{2\tg (t_k/2)}\cdot \frac{\cos n(t_{d-1}-t_d)}{4\tg (t_{d-1}/2)\tg (t_d/2)}\cdot f({\bf x}+{\bf t})d{\bf t}\\
&\qquad -\frac{1}{2\pi^d}\int_{\ZT^d}\prod_{k=1}^{d-2}\frac{\phi_k (nt_k)}{2\tg (t_k/2)}\cdot \frac{\cos n(t_{d-1}+t_d)}{4\tg (t_{d-1}/2)\tg (t_d/2)}\cdot f({\bf x}+{\bf t})d{\bf t}\\
&=U_n^{(1)}f({\bf x})-U_n^{(2)}f({\bf x}).
\end{align*} 
We will estimate only the first integral $U_n^{(1)}f({\bf x})$. The second can be evaluated similarly. 
Performing the change of variables
\begin{equation*}
u_1=t_1,\,u_2=t_2,\ldots, u_{d-1}=t_{d-1}-t_d, u_d=t_d,
\end{equation*}
in the expression of $U_n^{(1)}f({\bf x})$, we obtain
\begin{equation*}
U_n^{(1)}f({\bf x})=\frac{1}{2\pi^d}\int_{\ZT^d}\prod_{k=1}^{d-2}\frac{\phi_k (nu_k)}{2\tg (u_k/2)}\cdot \frac{\cos nu_{d-1}}{4\tg ((u_{d-1}+u_d/2))\tg (u_d/2)}\cdot G({\bf x},{\bf u})d{\bf u},
\end{equation*}
where
\begin{equation}\label{a33}
G({\bf x},{\bf u})=f(x_1+u_1,\,\ldots,\,x_{d-2}+u_{d-2},\,x_{d-1}+u_{d-1}+u_d,\,x_d+u_d).
\end{equation}
Applying the identity
\md0
\frac{1}{\tg (u+v)\tg v }=\frac{1}{\tg u\tg v}-\frac{1}{\tg u\tg (u+v)}-1,
\emd
we obtain
\begin{align*}
U_n^{(1)}f({\bf x})&=\frac{1}{2\pi^d}\int_{\ZT^d}\prod_{k=1}^{d-2}\frac{\phi_k (nu_k)}{2\tg (u_k/2)}\cdot \frac{\cos nu_{d-1}}{2\tg (u_{d-1}/2)}\cdot \frac{1}{2\tg (u_d/2)}\cdot G({\bf x},{\bf u})d{\bf u}\\
&-\frac{1}{2\pi^d}\int_{\ZT^d}\prod_{k=1}^{d-2}\frac{\phi_k (nu_k)}{2\tg (u_k/2)}\cdot \frac{\cos nu_{d-1}}{2\tg (u_{d-1}/2)}\cdot \frac{1}{2\tg ((u_{d-1}+u_d)/2)}\cdot G({\bf x},{\bf u})d{\bf u}\\
&-\frac{1}{2\pi^d}\int_{\ZT^d}\prod_{k=1}^{d-2}\frac{\phi_k (nu_k)}{2\tg (u_k/2)}\cdot \cos nu_{d-1}\cdot G({\bf x},{\bf u})d{\bf u}\\
&=U_n^{(1,1)}f({\bf x})-U_n^{(1,2)}f({\bf x})-U_n^{(1,3)}f({\bf x}).
\end{align*}
For each $i=1,2,3$ we shall find a function $F^{(i)}(\textbf{x})\ge 0$ such that 
\begin{align}
&|\{\textbf{x}\in \ZT^{d}:\, F^{(i)}(\textbf{x})>\lambda \}|\lesssim \frac{\|f\|_{\Log_{d-1}(\ZT^{d})}}{\lambda},\label{a45}\\
&\int_{\ZT^{d}}\exp\left(\frac{\varepsilon |U_n^{(1,i)}f({\bf x})|}{F^{(i)}(\textbf{x})}\right)d\textbf{x}\lesssim 1.\label{a46}
\end{align}
\textbf{Case $i=1$:} Consider the operator
\begin{align*}
U'_ng(x_1,\ldots,x_d)&=\frac{1}{2\pi^{d-1}}\int_{\ZT^{d-1}}\prod_{k=1}^{d-2}\frac{\phi_k (nu_k)}{2\tg (u_k/2)}\cdot \frac{\cos nu_{d-1}}{2\tg (u_{d-1}/2)}\\
&\qquad\times g(x_1+u_1,\ldots,x_{d-1}+u_{d-1},x_d)du_1\ldots du_{d-1}.
\end{align*}
applied on the function
\begin{equation}\label{a34}
g(x_1,\ldots,x_d)=\frac{1}{\pi}\int_{\ZT} \frac{f(x_1,\,\ldots,\,x_{d-2},\,x_{d-1}+t,\,x_d+t)}{2\tg (t/2)} dt.
\end{equation}
Taking into account \e {a33}, we get
\begin{align}
U_n^{(1,1)}f({\bf x})&=\frac{1}{2\pi^{d-1}}\int_{\ZT^{d-1}}\prod_{k=1}^{d-2}\frac{\phi_k (nu_k)}{2\tg (u_k/2)}\cdot \frac{\cos nu_{d-1}}{2\tg (u_{d-1}/2)}\label{a42}\\
&\qquad\times\left(\frac{1}{\pi}\int_{\ZT} \frac{1}{2\tg (u_d/2)}\cdot G({\bf x},{\bf u})du_d\right)du_1,\ldots du_{d-1}\nonumber\\
&=\frac{1}{2\pi^{d-1}}\int_{\ZT^{d-1}}\prod_{k=1}^{d-2}\frac{\phi_k (nu_k)}{2\tg (u_k/2)}\cdot \frac{\cos nu_{d-1}}{2\tg (u_{d-1}/2)}\nonumber\\
&\qquad\times g(x_1+u_1,\ldots,x_{d-1}+u_{d-1},x_d)du_1\ldots du_{d-1}\nonumber\\
&=U'_ng(x_1,\ldots,x_{d-1},x_d).\nonumber
\end{align} 
For a fixed $x_d$ the operator $U'_n$ can be considered as a $(d-1)$-dimensional $U$-type operator \e {a31}. 
Thus, according to the induction hypothesis, for each $x_d\in\ZT$ one can find a function $F_{x_d}(x_1,\ldots,x_{d-1})=F^{(1)}(x_1,\ldots,x_d)$ such that
\begin{align}
&|\{(x_1,\ldots,x_{d-1})\in \ZT^{d-1}:\, F_{x_d}(x_1,\ldots,x_{d-1})>\lambda \}|\lesssim \frac{\|g_{x_d}\|_{\Log_{d-2}(\ZT^{d-1})}}{\lambda},\label{a41}\\
&\int_{\ZT^{d-1}}\exp\left(\frac{c\varepsilon |U'_ng_{x_g}(x_1,\ldots,x_{d-1})|}{F_{x_d}(x_1,\ldots,x_{d-1})}\right)dx_1\ldots d x_{d-1}\lesssim 1,\, n=1,2,\ldots.\label{a44}
\end{align}
Here $g_{x_d}$ is $g(x_1,\ldots,x_d)$ as a function of variables $x_1,\ldots, x_{d-1}$.
On the other hand from \lem {L1} it follows that 
\begin{equation}\label{a43}
\int_{\ZT^{d}}\Log_{d-2}(g)\lesssim 1+\int_{\ZT^{d}}\Log_{d-1}(f)\lesssim \|f\|_{\Log_{d-1}(\ZT^{d})}=1.
\end{equation}
Applying \e {a72}, \e {a73}, \e {a43} and \e {a41},  we obtain
\begin{align*}
|\{\textbf{x}\in \ZT^{d}:\, F^{(1)}(\textbf{x})>\lambda \}|&\lesssim \frac{1}{\lambda}\int_{\ZT}\|g_{x_d}\|_{\Log_{d-2}(\ZT^{d-1})}dx_d\\
&\lesssim\frac{1}{\lambda}\int_{\ZT}\left(1+\int_{\ZT^{d-1}}\Log_{d-2}(g)dx_1\ldots dx_{d-1}\right)dx_d\\
&\lesssim \frac{1}{\lambda}\left(1+\int_{\ZT^{d}}\Log_{d-2}(g)\right)\\
&\lesssim \frac{1}{\lambda}\left(1+\int_{\ZT^{d}}\Log_{d-1}(f)\right)\\
&\lesssim \frac{\|f\|_{\Log_{d-1}(\ZT^{d})}}{\lambda}.
\end{align*}
Using \e{a42} and integrating inequality \e {a44} with respect to the variable $x_d$, we get
\begin{equation*}
\int_{\ZT^{d}}\exp\left(\frac{c\varepsilon |U_n^{(1,1)}f({\bf x})|}{F^{(1)}(\textbf{x})}\right)d\textbf{x}\lesssim 1
\end{equation*}
Thus we get \e {a45} and \e {a46} for $i=1$.

\textbf{Case $i=2$}: The estimation of $U_n^{(1,2)}f({\bf x})$ is based on the same argument. We have
\begin{align*}
U_n^{(1,2)}f({\bf x})&=\frac{1}{2\pi^{d-1}}\int_{\ZT^{d-1}}\prod_{k=1}^{d-2}\frac{\phi_k (nu_k)}{2\tg (u_k/2)}\cdot \frac{\cos nu_{d-1}}{2\tg (u_{d-1}/2)}du_1\ldots du_d\\
&\qquad \times\frac{1}{\pi}\int_{\ZT}\frac{G({\bf x},{\bf u})}{2\tg ((u_{d-1}+u_d)/2)}du_d.
\end{align*}
Change of variable $t=u_d+u_{d-1}$ the inner integral implies
\begin{align*}
\frac{1}{\pi}\int_{\ZT}&\frac{G({\bf x},{\bf u})}{2\tg ((u_{d-1}+u_d)/2)}du_d\\
&=\frac{1}{\pi}\int_{\ZT}\frac{f(x_1+u_1,\ldots,x_{d-2}+u_{d-2}, x_{d-1}+t,x_d-u_{d-1}+t)}{2\tg (t/2)}dt\\
&=g(x_1+u_1,\ldots,x_{d-2}+u_{d-2},x_{d-1},x_d-u_{d-1}),
\end{align*}
where $g$ is the same function \e {a34}. Thus we obtain
\begin{align*}
U_n^{(1,2)}f({\bf x})&=U''_ng(x_1,\ldots,x_{d-1},x_d)\\
&=\frac{1}{2\pi^{d-1}}\int_{\ZT^{d-1}}\prod_{k=1}^{d-2}\frac{\phi_k (nu_k)}{2\tg (u_k/2)}\cdot \frac{\cos nu_{d-1}}{2\tg (u_{d-1}/2)}\\
&\qquad\times g(x_1+u_1,\ldots,x_{d-2}+u_{d-2},x_{d-1}, x_d-u_{d-1})du_1\ldots du_{d-1}.
\end{align*}
For a fixed $x_{d-1}$ it can be considered as a $(d-1)$-dimensional $U$-type operator applied to $g$ as a function of the rest variables $x_1,\ldots, x_{d-2},x_d$. Applying the induction hypothesis, likewise the case of $i=1$, we then get a function $F^{(2)}(\textbf{x})$ satisfying \e {a45} and \e {a46} for $i=2$.

\textbf{Case $i=3$}: Write $U_n^{(1,3)}$ in the form
\begin{align*}
U_n^{(1,3)}f({\bf x})&=\frac{1}{2\pi^{d-1}}\int_{\ZT^{d-1}}\prod_{k=1}^{d-2}\frac{\phi_k (nu_k)}{2\tg (u_k/2)}\cdot \frac{\cos nu_{d-1}}{2\tg (u_k/2)}\\
&\qquad\times \left(\frac{1}{\pi}\int_{\ZT}\frac{G({\bf x},{\bf u})}{2\ctg (u_d/2)}du_d\right)d u_1\ldots du_{d-1}.
\end{align*}
After the change of variable $t=u_d-\pi$ for the inner integral we get the form
\begin{align*}
\frac{1}{\pi}\int_{\ZT}&\frac{f(x_1+u_1,\,\ldots,\,x_{d-2}+u_{d-2},\,x_{d-1}+u_{d-1}+u_d,\,x_d+u_d)}{2\ctg (u_d/2)}du_d\\
&=\frac{1}{\pi}\int_{\ZT}\frac{f(x_1+u_1,\,\ldots,\,x_{d-2}+u_{d-2},\,x_{d-1}+u_{d-1}+\pi+t,\,x_d+\pi+t)}{2\tg (t/2)}dt\\
&=g(x_1+u_1,\ldots,x_{d-1}+u_{d-2},x_{d-1}+u_{d-1}+\pi,x_d+\pi),
\end{align*} 
where $g$ is the function \e {a34}. Thus we obtain that
\begin{align*}
U_n^{(1,3)}f({\bf x})&=U'''_ng(x_1,\ldots,x_{d-1},x_d)\\
&=\frac{1}{2\pi^{d-1}}\int_{\ZT^{d-1}}\prod_{k=1}^{d-2}\frac{\phi_k (nu_k)}{2\tg (u_k/2)}\cdot \frac{\cos nu_{d-1}}{2\tg (u_{d-1}/2)}\\
&\qquad\times g(x_1+u_1,\ldots,x_{d-2}+u_{d-2},x_{d-1}+u_{d-1}+\pi, x_d+\pi)du_1\ldots du_{d-1}
\end{align*}
is a $(d-1)$-dimensional $U$-operator applied to the function
\begin{equation}\label{a35}
g(x_1,\ldots,x_{d-2},x_{d-1}+\pi, x_d+\pi)
\end{equation}
as a function on variables $x_1,\ldots,x_{d-1}$. Similarly we get \e {a45} and \e {a46} for $i=3$.	

Hence the desired estimation of $U_n$ is complete.

Since $S_n^*$ is a $U$-operator we can find a function $F_1(\textbf{x})$ such that
\begin{align}
&|\{\textbf{x}\in \ZT^{d}:\, F_1(\textbf{x})>\lambda \}|\lesssim \frac{\|f\|_{\Log_{d-1}(\ZT^{d})}}{\lambda}\label{a48}\\
&\int_{\ZT^{d}}\exp\left(\frac{\varepsilon |S^*_nf({\bf x})|}{F_1(\textbf{x})}\right)d\textbf{x}\lesssim 1,\quad n=1,2,\ldots .\label{a49}
\end{align}
As for the $\tilde S^*_n$, we have
\begin{equation*}
|\tilde S_n^*f({\bf x})|= |U_nf(\textbf{x})|+
G(\textbf{x}),
\end{equation*}
where 
\begin{align*}
&U_nf(\textbf{x})=\frac{1}{\pi^d}\int_{\ZT^d}\prod_{k=1}^d\frac{\cos nt_k}{2\tg (t_k/2)}f({\bf x}+{\bf t})d\textbf{t},\\
&G(\textbf{x})=\frac{1}{\pi^d}\left|\int_{\ZT^d}\frac{f({\bf x}+{\bf t})}{\prod_{k=1}^d2\tg (t_k/2)}d{\bf t}\right|,
\end{align*}
and $G(\textbf{x})$ satisfies
\begin{equation}\label{a50}
|\{G(\textbf{x})>\lambda \}|\lesssim \frac{\|f\|_{\Log_{d-1}(\ZT^d)}}{\lambda}.
\end{equation}
Since $U_n$ is a $U$-operator, there is a function $F_2(\textbf{x})$ satisfying
\begin{align}
&|\{\textbf{x}\in \ZT^{d}:\, F_2(\textbf{x})>\lambda \}|\lesssim \frac{\|f\|_{\Log_{d-1}(\ZT^{d})}}{\lambda},\label{a51}\\
&\int_{\ZT^{d}}\exp\left(\frac{\varepsilon |U_nf({\bf x})|}{F_2(\textbf{x})}\right)d\textbf{x}\lesssim 1,\quad n=1,2,\ldots .\label{a52}
\end{align}
Finally, according to \lem {L2} we have
\begin{align}
|S_nf({\bf x})|+|\tilde S_nf({\bf x})|&\le |S^*_nf({\bf x})|+|\tilde S_n^*f({\bf x})|+ F_3({\bf x})\\
&\le |S^*_nf({\bf x})|+|U_nf({\bf x})|+ G(\textbf{x})+F_3({\bf x}),
\end{align}
where the function $F_3({\bf x})\ge 0$ satisfies
\begin{equation}\label{a53}
\|F_3\|_{L^1(\ZT^d)}\lesssim \|f\|_{\Log_{d-1}(L)(\ZT^d)}.
\end{equation}
Observe that the function $F=4(F_1+F_2+F_3+G)$ will satisfy the conditions of \trm {T1}. Indeed, \e {a47} immediately follows from \e {a48}, \e {a50}, \e {a51} and \e {a53} (for $F_3$ we additionally apply Chebyshev's inequality). To prove \e {a40} observe that
\begin{align*}
\exp&\left(\frac{|S_nf(\textbf{x})|+|\tilde S_nf(\textbf{x})|}{F(\textbf{x})}\right)\\
&\qquad\le \exp\left(\frac{|S^*_nf({\bf x})|+|U_nf({\bf x})|+G(\textbf{x})+F_3({\bf x})}{F(\textbf{x})}\right)\\
&\qquad \le \exp\left(\frac{4|S^*_nf({\bf x})|}{F(\textbf{x})}\right) +\exp\left(\frac{4|U_nf({\bf x})|}{F(\textbf{x})}\right)\\
&\qquad\qquad+\exp\left(4\frac{G(\textbf{x})}{F(\textbf{x})}\right) 
+\exp\left(4\frac{F_3({\bf x})}{F(\textbf{x})}\right)\\
&\qquad \le\exp\left(\frac{|S^*_nf({\bf x})|}{F_1(\textbf{x})}\right) +\exp\left(\frac{|U_nf({\bf x})|}{F_2(\textbf{x})}\right)+2e.
\end{align*} 
Combining this with \e {a49} and \e {a52}, we will complete the proof of theorem.
\end{proof}
\begin{proof}[Proof of \coro {C1}]
Let $f\in \Log_{d-1}(L)(\ZT^d)$ and $F(x)$ be the function satisfying the conditions of \trm {T1}. Define
\begin{equation*}
	E_{f,\varepsilon}=\left\{\textbf{x}\in\ZT^d:\, F(\textbf{x})\le\frac{\|f\|_{\Log_{d-1}(\ZT^d)}}{\gamma\varepsilon}\right\},
\end{equation*}
where $\gamma$ is a constant. According to \e {a47} there is a constant $\gamma$ depended only on $d$ such that $|(E_{f,\varepsilon})^c|<\varepsilon $. This implies \e {a54}. Besides from \e {a40} we obtain
\begin{multline*}
\int_{E_{f,\varepsilon}}\exp\left(\gamma\cdot\varepsilon\cdot\frac{ |S_nf({\bf x})|+|\tilde S_nf({\bf x})|}{\|f\|_{\Log_{d-1}(\ZT^d)}}\right)d{\bf x}\\
\le \int_{\ZT^d}\exp\left(\frac{ |S_nf({\bf x})|+|\tilde S_nf({\bf x})|}{F(\textbf{x})}\right)d{\bf x}\lesssim 1,
\end{multline*}
completing the proof of corollary.
\end{proof}
\begin{proof}[Proof of \coro {C2}]
Given $f\in \Log_{d-1}(L)(\ZT^d)$. It is well known that $(C,1)$ means $\sigma_{\bf n}f$ of the Fourier series \e {x1} of $f$  and its conjugate \e {x2} almost everywhere converge to $f$ and $\tilde f$ respectively. Besides, there is norm convergence 
\begin{equation*}
\lim_{\min(\textbf{n})\to\infty}\|\sigma_{\bf n}f-f\|_{\Log_{d-1}(\ZT^d)}=0.
\end{equation*}
Applying this, one can find a set $G\subset \ZT^d$ and $d$-dimensional trigonometric polynomial $P_k$ such that
\begin{align}
&|G|>(2\pi)^d-\varepsilon/2,\label{a60}\\
 &\|f-P_k\|_{L^\infty(G)}<1/2k,\label{a61}\\
 &\|\tilde f-\tilde P_k\|_{L^\infty(G)}<1/2k,\label{a62}\\
 &\|f-P_k\|_{\Log_{d-1}(\ZT^d)}<\gamma\varepsilon_k/2k.\label{a63}
\end{align}
Applying \coro {C1} for $\varepsilon_k=\varepsilon/2^{k+1}$, we find sets $E_k\subset \ZT^d$ so that
\begin{align}
&|E_k|>(2\pi)^d-\varepsilon_k,\quad k=1,2,\ldots,\label{a55}\\
&\int_{E_k}\exp\left(\gamma\varepsilon_k\frac{ |S_n(f-P_k)|+|\tilde S_n(f-P_k)|}{\|f-P_k\|_{\Log_{d-1}(\ZT^d)}}\right)\le c,\, n=1,2,\ldots.\label{a56}
\end{align}
Define
\begin{equation*}
E_{f,\varepsilon}=G\bigcap\left(\bigcap_{k}E_k\right).
\end{equation*}
From \e {a60} and \e {a55} it follows the condition \e {a54}. Let $\phi(t)=\exp t-1$. One can easily check that $\phi(ab)\le a\phi (b)$ for $0<a <1$ and $b>0$. Thus, applying \e {a61}, \e {a63} and \e {a56}, we get
\begin{align*}
\lim_{n\to\infty}&\int_{E_{f,\varepsilon}}\bigg(\exp\big(A|S_nf-f|\big)-1\bigg)\\
&=\lim_{n\to\infty}\int_{E_{f,\varepsilon}}\bigg(\exp\big(A|S_n(f-P_k)-(f-P_k)|\big)-1\bigg)	\\
&\le\frac{A}{k}\sup_{n}\int_{E_{f,\varepsilon}}\bigg(\exp\big(k(|S_n(f-P_k)|+|f-P_k|)\big)\bigg)\\
&\le \frac{A}{k}\left(\sup_{n} \int_{E_{f,\varepsilon}}\exp\left(2k|S_n(f-P_k)|\right)+\int_{E_{f,\varepsilon}}\exp\left(2k|f-P_k|\right)\right)\\
&\le \frac{A}{k}\left(\sup_{n} \int_{E_{f,\varepsilon}}\exp\left(\frac{\gamma\varepsilon_k|S_n(f-P_k)|}{\|f-P_k\|_{\Log_{d-1}(\ZT^d)}}\right)+\int_{E_{f,\varepsilon}}\exp\left(2k|f-P_k|\right)\right)\\
&\lesssim  \frac{A}{k}.
\end{align*}
Since the last quantity can be arbitrarily small, we get \e {a58}. Similarly, we can get
\begin{align*}
\lim_{n\to\infty}&\int_{E_{f,\varepsilon}}\bigg(\exp\left(A|\tilde S_nf-\tilde f|\right)-1\bigg)\\
&\le \frac{A}{k}\left(\sup_{n} \int_{E_{f,\varepsilon}}\exp\left(\frac{\gamma\varepsilon_k|\tilde S_n(f-P_k)|}{\|f-P_k\|_{\Log_{d-1}(\ZT^d)}}\right)+\int_{E_{f,\varepsilon}}\exp\left(2k|\tilde f-\tilde P_k|\right)\right)\\
&\lesssim  \frac{A}{k},
\end{align*}
and so \e {a59}. 
\end{proof}
\begin{proof}[Proof of \coro {C3}]
Let $n_k$, $k=1,2,\ldots$, be a sequence of positive integers and $f\in \Log_{d-1}(L)(\ZT^d)$. Given $\varepsilon>0$, we can find a polynomial $P$ such that
\begin{equation}
\|f-P\|_{\Log_{d-1}(\ZT^d)}<\gamma\varepsilon/2.
\end{equation}
Applying \coro {C1}, we find a set $E_{f,\varepsilon}\subset \ZT^d$ such that
\begin{align}
&|E_{f,\varepsilon}|>(2\pi)^d-\varepsilon,\label{a64}\\
&\int_{E_{f,\varepsilon}}\exp\left(\gamma\varepsilon\frac{ |S_n(f-P)|+|\tilde S_n(f-P)|}{\|f-P\|_{\Log_{d-1}(\ZT^d)}}\right)\le c,\, n=1,2,\ldots.\label{a65}
\end{align}	
Denote
\begin{align*}
E_k&=\{\textbf{x}\in E_{f,\varepsilon}:\,|S_{n_k}(f-P)(\textbf{x})|+|\tilde S_{n_k}(f-P)(\textbf{x})|>\varepsilon\log k \}\\
&=\left\{\textbf{x}\in E_{f,\varepsilon}:\,\exp\left(\gamma\varepsilon\frac{ |S_{n_k}(f-P)|+|\tilde S_{n_k}(f-P)|}{\|f-P\|_{\Log_{d-1}(\ZT^d)}}\right)\right.\\
&\qquad\qquad\qquad\qquad\qquad\qquad\left.>\exp\left(\frac{\gamma \varepsilon}{\|f-P\|_{\Log_{d-1}(\ZT^d)}}\log k\right) \right\}\\
&\subset \left\{\textbf{x}\in E_{f,\varepsilon}:\,\exp\left(\gamma\varepsilon\frac{ |S_{n_k}(f-P)|+|\tilde S_{n_k}(f-P)|}{\|f-P\|_{\Log_{d-1}(\ZT^d)}}\right)>k^2\right\}.
\end{align*}
Thus, applying Chebishev's inequality, from \e {a65} we get $|E_k|\le c|E_{f,\varepsilon}|/k^2$, and so for almost all 
$\textbf{x}\in E_{f,\varepsilon}$ we have $\textbf{x}\in E_k$, $k>k(\textbf{x})$. This implies that
\begin{equation*}
\limsup_{k\to\infty}\frac{ |S_{n_k}(f)|+|\tilde S_{n_k}(f)|}{\log k}\le \varepsilon 
\end{equation*}
a.e. on $E_{f,\varepsilon}$. Since $\varepsilon>0$ can be arbitrarily small we get \e {a66}.
\end{proof}

\end{document}